\documentclass[11pt]{article}
\usepackage{amsmath,amssymb,amsthm}
\usepackage{mathrsfs}
\usepackage{geometry}
\usepackage{array}
\usepackage{booktabs}
\usepackage[colorlinks=true,linkcolor=blue,citecolor=blue]{hyperref}
\newtheorem{proposition}{Proposition}[section]

\newtheorem{remark}{Remark}
\newtheorem{definition}{Definition}

\DeclareMathOperator{\Arg}{Arg}

\title{A Recursion Backbone for Circular and Elliptic Clausen Hierarchies}
\author{Ken Nagai\thanks{Email: \texttt{tknagai@outlook.com}. Independent Researcher.}}
\date{}

\begin{document}
\maketitle

\begin{abstract}
We introduce a recursion backbone underlying circular and
elliptic Clausen hierarchies.
Starting from classical Clausen-type structures associated
with polylogarithmic phases and moduli, we show that the
hierarchy admits a natural elliptic deformation expressed
in terms of Jacobi theta functions.
Within this framework the CL- and SL-type components arise
in a unified recursive structure, while a generating
viewpoint clarifies the deformation from circular to
elliptic regimes.
This approach provides a natural elliptic extension of
polylogarithmic Clausen hierarchies.
\end{abstract}

\section{Introduction}

Clausen-type functions arise naturally in the study of
polylogarithms, Fourier series, and special values of
zeta-type functions, where phase and modulus components
often appear in complementary roles~\cite{Lewin1981}.
In the classical circular setting these structures are
closely related to polylogarithmic functions and admit
hierarchical relations generated by simple recursive
mechanisms.
The purpose of the present work is to exhibit a recursion
backbone underlying such Clausen hierarchies and to show
that this structure admits a natural elliptic deformation
expressed in terms of Jacobi theta functions.

Related structures appear in several contexts in the
literature, including classical Clausen functions,
polylogarithmic identities, and elliptic function
frameworks involving Jacobi theta or Weierstrass
functions.
However, the recursive structure underlying these
hierarchies and its compatibility with elliptic
deformations have not been emphasized in a unified way.
The present work focuses on this recursion backbone and
uses it as a guiding principle for connecting circular
polylogarithmic constructions with their elliptic
extensions.

The paper is organized as follows.
In Section~2 we formulate the recursion backbone that
governs the hierarchy of Clausen-type functions.
Section~3 develops the correspondence between the
circular polylogarithmic setting and its elliptic
counterpart expressed through Jacobi theta functions.
In Section~4 we introduce a generating deformation
framework which clarifies how CL- and SL-type components
emerge within a unified recursive structure.
Finally, Section~5 provides analytical remarks and
discusses several directions toward a broader unified
generating framework.

  Throughout the paper, standard notation for polylogarithmic
and elliptic functions is used, and Jacobi theta functions
are normalized in the usual analytic convention.

  The present work is related to earlier investigations on analytic Bernoulli-type structures and selector kernels, but focuses on the elliptic deformation of Clausen-type functions.

\section{The Master Polylogarithmic Object and the SL-Type Family}

In this section we introduce a unified master object from which both
CL-type and SL-type families arise as complementary components.
We begin with integer orders, where all series converge absolutely,
so that the structure is completely transparent.

\subsection{The Polylogarithmic Master Function}

Let $\theta \in \mathbb{R}$ and $n \ge 1$.
We consider the polylogarithm on the unit circle~\cite{Lewin1981}\cite{ZagierPolylog}
\[
\mathrm{Li}_n(e^{i\theta})
=
\sum_{k=1}^{\infty}\frac{e^{ik\theta}}{k^n}
=
\sum_{k=1}^{\infty}\frac{\cos(k\theta)}{k^n}
+
i\sum_{k=1}^{\infty}\frac{\sin(k\theta)}{k^n}.
\]

This decomposition already exhibits the two classical
Clausen-type components:
the cosine series and the sine series.
However, their recursion relations contain alternating sign factors.
To eliminate these and obtain a uniform backbone,
we introduce a phase-normalized master function
\[
F_n(\theta)
:=
i^{-n}\,\mathrm{Li}_n(e^{i\theta}).
\]

\begin{proposition}[Uniform Recursion Backbone]
For all integers $n\ge1$,
\[
\frac{d}{d\theta}F_{n+1}(\theta)=F_n(\theta).
\]
\end{proposition}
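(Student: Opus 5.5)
The plan is to differentiate the series for $\mathrm{Li}_{n+1}(e^{i\theta})$ term by term and then check that the phase factor $i^{-n}$ absorbs the factor $i$ produced by the chain rule. Formally,
\[
\frac{d}{d\theta}\,\frac{e^{ik\theta}}{k^{n+1}} = i\,\frac{e^{ik\theta}}{k^{n}},
\]
so $\frac{d}{d\theta}\mathrm{Li}_{n+1}(e^{i\theta}) = i\,\mathrm{Li}_n(e^{i\theta})$. Multiplying by $i^{-(n+1)}$ then gives
\[
F_{n+1}'(\theta) = i^{-(n+1)}\cdot i\,\mathrm{Li}_n(e^{i\theta}) = i^{-n}\mathrm{Li}_n(e^{i\theta}) = F_n(\theta).
\]
This is the whole algebraic content. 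The normalization $i^{-n}$ was chosen exactly so that the alternating signs relating the cosine and sine components disappear.

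The analytic work is justifying the termwise differentiation, and I split this into two cases.

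For $n\ge 2$, the differentiated series $\sum_k i e^{ik\theta}/k^n$ is dominated by $\sum_k k^{-n}<\infty$. It therefore converges uniformly on $\mathbb{R}$ by the Weierstrass M-test. The original series for $\mathrm{Li}_{n+1}$ also converges, so the standard theorem on differentiating uniformly convergent series applies. The identity then holds for every $\theta\in\mathbb{R}$.

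The case $n=1$ is the main obstacle. Here $F_1(\theta)=-i\,\mathrm{Li}_1(e^{i\theta})=i\log(1-e^{i\theta})$, and the series $\sum_k e^{ik\theta}/k$ converges only conditionally. It also diverges at $\theta\in 2\pi\mathbb{Z}$, so the statement must be read for $\theta\notin 2\pi\mathbb{Z}$. I would first try to show that $\sum_k e^{ik\theta}/k$ converges uniformly on every compact subset of $(0,2\pi)$, and similarly on the translates of this interval. This follows from the Dirichlet test: the partial sums $\sum_{k\le N} e^{ik\theta}$ are bounded by $1/|\sin(\theta/2)|$, which is uniformly bounded on such compacts, and $1/k$ decreases monotonically to $0$. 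The termwise-differentiation theorem then applies locally and yields $F_2'=F_1$ away from $2\pi\mathbb{Z}$.

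As a cross-check, or as an alternative route for $n=1$, I would use the power series. For $|z|<1$ one has $z\,\frac{d}{dz}\mathrm{Li}_{n+1}(z)=\mathrm{Li}_n(z)$. Setting $z=re^{i\theta}$ with $r<1$ gives $\partial_\theta \mathrm{Li}_{n+1}(re^{i\theta})=i\,\mathrm{Li}_n(re^{i\theta})$. One then lets $r\to1^-$, using local uniform convergence (Abel's theorem) on compact subsets of $\theta\notin 2\pi\mathbb{Z}$ to pass to the limit in both sides.
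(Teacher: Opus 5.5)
Your proof is correct and follows the paper's argument: the paper also cites $\frac{d}{d\theta}\mathrm{Li}_{n+1}(e^{i\theta})=i\,\mathrm{Li}_n(e^{i\theta})$ and then uses $i^{-(n+1)}\cdot i=i^{-n}$. The paper takes that identity as classical, whereas you justify the termwise differentiation (M-test for $n\ge2$, Dirichlet test on compacts avoiding $2\pi\mathbb{Z}$ for $n=1$) and correctly note that the case $n=1$ only holds for $\theta\notin 2\pi\mathbb{Z}$.
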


\begin{proof}
Using the classical identity
\[
\frac{d}{d\theta}\mathrm{Li}_{n+1}(e^{i\theta})
=
i\,\mathrm{Li}_n(e^{i\theta}),
\]
we compute
\[
\frac{d}{d\theta}F_{n+1}
=
\frac{d}{d\theta}
\left(
i^{-(n+1)}\mathrm{Li}_{n+1}(e^{i\theta})
\right)
=
i^{-(n+1)} i\,\mathrm{Li}_n(e^{i\theta})
=
i^{-n}\mathrm{Li}_n(e^{i\theta})
=
F_n.
\]
\end{proof}

Thus the order-lowering operation is governed by a single
differential relation without alternating signs.
This uniform recursion will serve as the structural backbone
for both CL-type and SL-type families.

\subsection{CL-type and SL-type as Complementary Components}

We now define two real-valued families as the complementary
components of the master function.

\begin{definition}
For $n\ge1$ we define
\[
A(n;\theta):=2\,\Re F_n(\theta),
\qquad
B(n;\theta):=-2\,\Im F_n(\theta).
\]
\end{definition}

The family $A(n;\theta)$ corresponds to the CL-type extension,
while $B(n;\theta)$ defines the SL-type extension~\cite{Lewin1981}.
Both are derived from the same master object.

\begin{proposition}[Parallel Recursion]
For all integers $n\ge1$,
\[
\frac{d}{d\theta}A(n+1;\theta)=A(n;\theta),
\qquad
\frac{d}{d\theta}B(n+1;\theta)=B(n;\theta).
\]
\end{proposition}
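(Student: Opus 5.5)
The plan is to deduce the statement directly from the Uniform Recursion Backbone proposition by taking real and imaginary parts. Since $\theta$ is a real variable, differentiation in $\theta$ commutes with $\Re$ and $\Im$: if $F_{n+1}(\theta)=u(\theta)+iv(\theta)$ with $u,v$ real-valued and differentiable, then $\frac{d}{d\theta}F_{n+1}=u'+iv'$. Hence
\[
\frac{d}{d\theta}\Re F_{n+1}(\theta)=\Re\frac{d}{d\theta}F_{n+1}(\theta)=\Re F_n(\theta),
\qquad
\frac{d}{d\theta}\Im F_{n+1}(\theta)=\Im F_n(\theta),
\]
where the backbone relation $\frac{d}{d\theta}F_{n+1}=F_n$ has been used.

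Multiplying the first identity by $2$ gives $\frac{d}{d\theta}A(n+1;\theta)=A(n;\theta)$. Multiplying the second by $-2$ gives $\frac{d}{d\theta}B(n+1;\theta)=B(n;\theta)$. Both follow immediately from the definition of $A$ and $B$ and the linearity of differentiation.

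The only step needing care is the differentiability hypothesis hidden in the backbone proposition, namely termwise differentiation of $\mathrm{Li}_{n+1}(e^{i\theta})$. For $n\ge 2$ the differentiated series $\sum e^{ik\theta}/k^{n}$ converges absolutely and uniformly, so there is no issue. For $n=1$ the derived series $\sum e^{ik\theta}/k$ converges only conditionally, so the identity holds for $\theta\notin 2\pi\mathbb{Z}$. There one can use $\mathrm{Li}_1(e^{i\theta})=-\log(1-e^{i\theta})$ and the locally uniform convergence of the series away from $2\pi\mathbb{Z}$.

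Since the statement relies on the earlier proposition, I would simply inherit its domain of validity and note that for $n=1$ it is understood for $\theta\notin2\pi\mathbb{Z}$. I expect no genuine obstacle beyond this remark.
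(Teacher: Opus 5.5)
Your proposal is correct and follows the paper's proof exactly: the paper also obtains both identities by taking real and imaginary parts of $\frac{d}{d\theta}F_{n+1}=F_n$. Your caveat is accurate and is something the paper leaves implicit: for $n=1$ the identity holds only for $\theta\notin 2\pi\mathbb{Z}$, where the series $\sum_k e^{ik\theta}/k$ converges locally uniformly.
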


\begin{proof}
This follows immediately by taking real and imaginary parts
of the identity
\[
\frac{d}{d\theta}F_{n+1}=F_n.
\]
\end{proof}

\subsection{Low-Order Examples}

For $n=1$ we obtain
\[
F_1(\theta)
=
i^{-1}\sum_{k\ge1}\frac{e^{ik\theta}}{k}
=
-i\sum_{k\ge1}\frac{\cos(k\theta)}{k}
+\sum_{k\ge1}\frac{\sin(k\theta)}{k}.
\]

Hence
\[
A(1;\theta)
=
2\sum_{k\ge1}\frac{\sin(k\theta)}{k},
\qquad
B(1;\theta)
=
2\sum_{k\ge1}\frac{\cos(k\theta)}{k}.
\]

In particular,
\[
A(1;\theta)=\theta-\pi
\quad (0<\theta<2\pi),
\]
which recovers the classical Fourier representation
of the sawtooth function.

\subsection{Boundary Constants and Base-Point Normalization}

The recursion relation determines $A(n;\theta)$ and $B(n;\theta)$
only up to additive constants.
These constants are fixed by choosing a base point,
typically $\theta=0$.

We define the boundary constants
\[
\mathcal{C}_n := A(n;0),
\qquad
\mathcal{S}_n := B(n;0).
\]

Different regimes (circular, elliptic, hyperbolic)
will modify these boundary values,
while preserving the same differential backbone.
This separation between recursion and boundary data
is the fundamental structural principle inherited
from Part~I.

\subsection{Elliptic CL/SL families from the normalized $\vartheta_1$ seed}

We now lift the circular master recursion to the elliptic regime.
Throughout, $\tau$ lies in the upper half-plane and $z$ is the elliptic variable.

\paragraph{Normalized seed.}
Let $\vartheta_1(z|\tau)$ be the odd Jacobi theta function and define its normalized form
\begin{equation}\label{eq:theta1-normalized}
\widetilde{\vartheta}_1(z|\tau)
:=\frac{\vartheta_1(z|\tau)}{\vartheta_1'(0|\tau)}.
\end{equation}
Then $\widetilde{\vartheta}_1(z|\tau)\sim z$ as $z\to 0$, hence
$\log \widetilde{\vartheta}_1(z|\tau)=\log z + O(z^2)$.

\paragraph{Elliptic master family.}
Define the elliptic master function of order $1$ by
\begin{equation}\label{eq:ell-master-seed}
F^{\mathrm{ell}}_1(z;\tau):=\log \widetilde{\vartheta}_1(z|\tau),
\end{equation}
and for integers $n\ge1$ define $F^{\mathrm{ell}}_{n+1}$ by the base-point integral recursion
\begin{equation}\label{eq:ell-master-recursion}
F^{\mathrm{ell}}_{n+1}(z;\tau):=\int_{0}^{z} F^{\mathrm{ell}}_{n}(w;\tau)\,dw.
\end{equation}
Consequently, for all $n\ge1$ we have the uniform differential backbone
\begin{equation}\label{eq:ell-backbone}
\partial_z F^{\mathrm{ell}}_{n+1}(z;\tau)=F^{\mathrm{ell}}_{n}(z;\tau).
\end{equation}

\paragraph{Elliptic CL/SL components.}
We define the elliptic CL-type and SL-type families as the complementary components
of the same master object:
\begin{equation}\label{eq:ell-CLSL-def}
A^{\mathrm{ell}}(n;z;\tau):=2\,\Re\,F^{\mathrm{ell}}_{n}(z;\tau),
\qquad
B^{\mathrm{ell}}(n;z;\tau):=-2\,\Im\,F^{\mathrm{ell}}_{n}(z;\tau).
\end{equation}
By taking real and imaginary parts of~\eqref{eq:ell-backbone}, we obtain the parallel recursions
\begin{equation}\label{eq:ell-CLSL-rec}
\partial_z A^{\mathrm{ell}}(n+1;z;\tau)=A^{\mathrm{ell}}(n;z;\tau),
\qquad
\partial_z B^{\mathrm{ell}}(n+1;z;\tau)=B^{\mathrm{ell}}(n;z;\tau).
\end{equation}

\begin{remark}[Boundary data]
The recursions~\eqref{eq:ell-CLSL-rec} determine $A^{\mathrm{ell}}$ and $B^{\mathrm{ell}}$
up to additive constants; these are fixed by the choice of base point in~\eqref{eq:ell-master-recursion}.
As in Part~I, the elliptic regime is thus encoded by the same recursion backbone together with
boundary data inherited from the seed~\eqref{eq:ell-master-seed}.
\end{remark}

\subsection{Trigonometric degeneration: $\widetilde{\vartheta}_1 \to \sin(\pi z)$}

We record the standard trigonometric degeneration as $\Im\tau\to+\infty$
(i.e.\ $q:=e^{\pi i\tau}\to 0$)~\cite{Apostol}. One has the well-known product expansion~\cite{DLMF}\cite{WhittakerWatson}\cite{Apostol}
\[
\vartheta_1(z|\tau)
=
2\,q^{1/4}\sin(\pi z)\,
\prod_{m=1}^{\infty}\bigl(1-q^{2m}\bigr)\bigl(1-2q^{2m}\cos(2\pi z)+q^{4m}\bigr),
\]
and in particular~\cite{DLMF}\cite{WhittakerWatson}
\[
\vartheta_1'(0|\tau)
=
2\pi\,q^{1/4}\prod_{m=1}^{\infty}\bigl(1-q^{2m}\bigr)^3.
\]
Therefore the normalized theta satisfies
\begin{equation}\label{eq:theta1-degeneration}
\widetilde{\vartheta}_1(z|\tau)
=
\frac{\vartheta_1(z|\tau)}{\vartheta_1'(0|\tau)}
=
\frac{\sin(\pi z)}{\pi}\,
\prod_{m=1}^{\infty}\frac{1-2q^{2m}\cos(2\pi z)+q^{4m}}{(1-q^{2m})^2}
\;\xrightarrow[q\to0]{}\;
\frac{\sin(\pi z)}{\pi}.
\end{equation}

Consequently, the elliptic seed degenerates to the trigonometric seed~\cite{DLMF}:
\begin{equation}\label{eq:seed-degeneration}
F^{\mathrm{ell}}_1(z;\tau)
=
\log\widetilde{\vartheta}_1(z|\tau)
\;\xrightarrow[q\to0]{}\;
\log\!\Bigl(\frac{\sin(\pi z)}{\pi}\Bigr).
\end{equation}
Since the higher orders are generated by the same base-point recursion
\[
F^{\mathrm{ell}}_{n+1}(z;\tau)=\int_0^z F^{\mathrm{ell}}_n(w;\tau)\,dw,
\]
the entire elliptic family admits the corresponding trigonometric degeneration
(up to boundary constants inherited from the base point), and hence the
elliptic CL/SL components reduce to their circular counterparts in this limit.

\subsection{Bridge to the circular master via the trigonometric limit}

We now compare the trigonometric degeneration of the elliptic seed
with the circular master function introduced in Section~2.

Recall that in the circular setting we defined
\[
F_1(\theta)
=
i^{-1}\,\mathrm{Li}_1(e^{i\theta})
=
-i\,\mathrm{Li}_1(e^{i\theta})
=
-i\bigl(-\log(1-e^{i\theta})\bigr)
=
i\,\log(1-e^{i\theta}).
\]
Using the elementary identity
\[
1-e^{i\theta}
=
e^{i\theta/2}\bigl(-2i\sin(\tfrac{\theta}{2})\bigr),
\]
we obtain
\[
\log(1-e^{i\theta})
=
\log\!\Bigl(2\sin\!\frac{\theta}{2}\Bigr)
+
i\Bigl(\frac{\theta}{2}-\frac{\pi}{2}\Bigr),
\]
up to an additive constant depending on the branch choice.
Hence, modulo affine-linear terms in $\theta$ (which are absorbed
into boundary constants under the recursion),
the circular seed is governed by
\[
F_1(\theta)
\sim
\log\!\Bigl(\sin\!\frac{\theta}{2}\Bigr).
\]

On the other hand, from~\eqref{eq:seed-degeneration} we have~\cite{DLMF}
\[
F^{\mathrm{ell}}_1(z;\tau)
\;\xrightarrow[q\to0]{}\;
\log\!\Bigl(\frac{\sin(\pi z)}{\pi}\Bigr).
\]
Identifying $\theta=2\pi z$, we obtain
\[
\log\!\Bigl(\sin\!\frac{\theta}{2}\Bigr)
=
\log\!\bigl(\sin(\pi z)\bigr),
\]
so that the elliptic seed degenerates to the circular seed
(up to additive constants and linear terms, which do not affect the
recursion backbone).

Thus the elliptic master family provides a genuine lift of the
circular polylogarithmic master, with identical differential recursion
and degeneration governed entirely by the normalized $\vartheta_1$.

\subsection{The second elliptic order}

We compute explicitly the second order of the elliptic master family.

By definition,
\[
F^{\mathrm{ell}}_2(z;\tau)
=
\int_0^z F^{\mathrm{ell}}_1(w;\tau)\,dw
=
\int_0^z \log \widetilde{\vartheta}_1(w|\tau)\,dw.
\]

\paragraph{Local expansion near $z=0$.}
Since
\[
\widetilde{\vartheta}_1(w|\tau)
=
w + O(w^3),
\]
we have
\[
\log \widetilde{\vartheta}_1(w|\tau)
=
\log w + O(w^2).
\]
Therefore,
\[
F^{\mathrm{ell}}_2(z;\tau)
=
\int_0^z \log w\,dw + O(z^3)
=
z\log z - z + O(z^3).
\]

In particular, the leading singular structure is universal
and independent of $\tau$:
\[
F^{\mathrm{ell}}_2(z;\tau)
=
z\log z - z + O(z^3).
\]

\subsection{First visualization of the elliptic SL-type component}

To make the SL-type component visible, we restrict the elliptic variable to the
real axis and fix a branch of the logarithm.
Let $x\in(0,1)$ and write
\[
F^{\mathrm{ell}}_1(x;\tau)=\log \widetilde{\vartheta}_1(x|\tau).
\]
We then decompose the logarithm into modulus and phase:
\begin{equation}\label{eq:log-mod-arg}
\log \widetilde{\vartheta}_1(x|\tau)
=
\log\bigl|\widetilde{\vartheta}_1(x|\tau)\bigr|
+
i\,\Arg\bigl(\widetilde{\vartheta}_1(x|\tau)\bigr),
\end{equation}
where $\Arg$ denotes a chosen continuous branch on $(0,1)$.

Accordingly, the elliptic CL/SL seeds are
\begin{equation}\label{eq:ell-seed-CLSL}
A^{\mathrm{ell}}(1;x;\tau)=2\log\bigl|\widetilde{\vartheta}_1(x|\tau)\bigr|,
\qquad
B^{\mathrm{ell}}(1;x;\tau)=-2\,\Arg\bigl(\widetilde{\vartheta}_1(x|\tau)\bigr).
\end{equation}

The SL-type seed is thus literally the (negative) phase of the normalized theta function.
In the trigonometric degeneration $\Im\tau\to+\infty$ one has
$\widetilde{\vartheta}_1(x|\tau)\to \sin(\pi x)/\pi>0$ for $x\in(0,1)$,
so the phase vanishes and $B^{\mathrm{ell}}(1;x;\tau)\to 0$,
while higher orders $B^{\mathrm{ell}}(n;x;\tau)$ acquire nontrivial contributions
through the recursion and boundary data.

\paragraph{The next SL-order.}
By the defining recursion,
\[
F^{\mathrm{ell}}_2(x;\tau)=\int_0^x \log \widetilde{\vartheta}_1(t|\tau)\,dt,
\]
hence the elliptic SL-type component at order $2$ is
\begin{equation}\label{eq:Bell2-one-line}
B^{\mathrm{ell}}(2;x;\tau)
=
-2\,\Im\,F^{\mathrm{ell}}_2(x;\tau)
=
-2\,\Im\int_0^x \log \widetilde{\vartheta}_1(t|\tau)\,dt
=
-2\int_0^x \Arg\!\bigl(\widetilde{\vartheta}_1(t|\tau)\bigr)\,dt,
\end{equation}
where the last equality uses the decomposition~\eqref{eq:log-mod-arg} along a fixed continuous branch on the integration path.

\begin{remark}[Branch choice for $\Arg$]
Whenever $\Arg(\widetilde{\vartheta}_1(\,\cdot\,|\tau))$ is used, we fix a continuous branch along the chosen integration path that avoids zeros of $\vartheta_1$; equivalently, the formulas are understood piecewise on subintervals not crossing the nodal set.
\end{remark}

\begin{remark}[Trigonometric limit of the SL-seed]
In the trigonometric degeneration $\Im\tau\to+\infty$ we have $\widetilde{\vartheta}_1(x|\tau)\to \sin(\pi x)/\pi>0$ for $x\in(0,1)$, hence $\Arg(\widetilde{\vartheta}_1(x|\tau))\to0$ and therefore $B^{\mathrm{ell}}(1;x;\tau)\to0$ and $B^{\mathrm{ell}}(2;x;\tau)\to0$ by~\eqref{eq:Bell2-one-line}.
\end{remark}

\subsection{The recursion backbone and regime separation}

A central principle of this work is that the CL/SL families are generated by a
single recursion backbone, while the difference between regimes (circular vs.\ elliptic)
is carried entirely by the seed and the associated boundary data.

\paragraph{Master recursion (backbone).}
Let $F_{1}$ be a chosen seed (circular or elliptic).
For integers $n\ge1$, define
\begin{equation}\label{eq:backbone-int}
F_{n+1}(z):=\int_{0}^{z} F_{n}(w)\,dw,
\end{equation}
where the integration path is fixed within a domain avoiding the nodal set of the seed
(when relevant). Then the family satisfies
\begin{equation}\label{eq:backbone-diff}
\partial_z F_{n+1}(z)=F_n(z)\qquad(n\ge1).
\end{equation}

\paragraph{CL/SL decomposition.}
Whenever $F_n$ is complex-valued, we define the complementary components
\begin{equation}\label{eq:backbone-CLSL}
A(n;z):=2\,\Re F_n(z),
\qquad
B(n;z):=-2\,\Im F_n(z).
\end{equation}
Taking real and imaginary parts of~\eqref{eq:backbone-diff} yields the parallel recursions
\begin{equation}\label{eq:backbone-CLSL-rec}
\partial_z A(n+1;z)=A(n;z),
\qquad
\partial_z B(n+1;z)=B(n;z).
\end{equation}

\paragraph{Regime separation: seed and boundary data.}
The recursion~\eqref{eq:backbone-diff} determines $F_n$ only up to additive constants;
these are fixed by the base point and the branch choices implicit in~\eqref{eq:backbone-int}.
Thus the analytic content of a regime is encoded by the choice of seed $F_1$
together with its boundary data, while the recursion backbone remains unchanged.

In particular, in the elliptic regime we take
\[
F^{\mathrm{ell}}_1(z;\tau)=\log\widetilde{\vartheta}_1(z|\tau),
\qquad
\widetilde{\vartheta}_1(z|\tau)=\frac{\vartheta_1(z|\tau)}{\vartheta_1'(0|\tau)},
\]
and generate $F^{\mathrm{ell}}_{n}$ by~\eqref{eq:backbone-int}.
The circular regime is recovered by the trigonometric degeneration $\Im\tau\to+\infty$,
which sends $\widetilde{\vartheta}_1(z|\tau)\to \sin(\pi z)/\pi$ and hence matches the
polylogarithmic circular seed after identifying $\theta=2\pi z$.

\paragraph{Operator viewpoint (informal).}
The recursion backbone may be read operatorially:
$\partial_z$ acts as a lowering operator,
while the integration operator
\[
\mathcal{I}_z[f](z):=\int_{0}^{z} f(w)\,dw
\]
acts as a raising operator.
Thus $\mathcal{I}_z$ and $\partial_z$ generate the entire tower from the seed $F_1$,
and the analytic regime enters only through the initial datum.

\paragraph{Growth of the SL-type from the phase of the seed.}
In the elliptic regime the SL-type tower is entirely generated by the phase
of the seed.
Indeed, writing
\[
F^{\mathrm{ell}}_1(z;\tau)
=
\log\bigl|\widetilde{\vartheta}_1(z|\tau)\bigr|
+
i\,\Arg\bigl(\widetilde{\vartheta}_1(z|\tau)\bigr),
\]
the imaginary part is precisely the phase function.
Since the recursion backbone is purely integral,
\[
F^{\mathrm{ell}}_{n+1}(z;\tau)
=
\int_{0}^{z} F^{\mathrm{ell}}_{n}(w;\tau)\,dw,
\]
the SL-type component satisfies
\[
B^{\mathrm{ell}}(n+1;z;\tau)
=
-2\,\Im F^{\mathrm{ell}}_{n+1}(z;\tau)
=
\int_{0}^{z} B^{\mathrm{ell}}(n;w;\tau)\,dw.
\]
Thus the entire SL-tower is obtained by iterated integration
of the phase of the normalized theta function.
In particular, if the phase vanishes in a degeneration
(as in the trigonometric limit),
the whole SL-family collapses accordingly.

\paragraph{Parallel growth of the CL-type.}
The CL-type component evolves in perfect parallel.
Writing
\[
F^{\mathrm{ell}}_1(z;\tau)
=
\log\bigl|\widetilde{\vartheta}_1(z|\tau)\bigr|
+
i\,\Arg\bigl(\widetilde{\vartheta}_1(z|\tau)\bigr),
\]
the real part represents the logarithmic modulus.
Since the recursion backbone acts by integration,
we similarly obtain
\[
A^{\mathrm{ell}}(n+1;z;\tau)
=
2\,\Re F^{\mathrm{ell}}_{n+1}(z;\tau)
=
\int_{0}^{z} A^{\mathrm{ell}}(n;w;\tau)\,dw.
\]
Hence the CL-tower is generated by iterated integration
of the logarithmic modulus of the normalized theta function.

\section{Correspondence between the Circular and Elliptic Regimes}

Having established the common recursion backbone, we now clarify the precise
correspondence between the circular and elliptic constructions.
The essential principle is simple:

\medskip
\emph{The recursion is universal; only the seed and its boundary data change.}
\medskip

\subsection{Seeds in the two regimes}

In the circular regime (period $1$), the natural seed is
\begin{equation}\label{eq:circular-seed}
F^{\mathrm{circ}}_1(x)
=
\log\!\bigl(2\sin(\pi x)\bigr),
\qquad x\in(0,1).
\end{equation}
Its iterated integrals generate the classical CL/SL-type families
associated with trigonometric Clausen functions and polylogarithms.

In the elliptic regime (period lattice $\mathbb{Z}+\tau\mathbb{Z}$)~\cite{WhittakerWatson},
the seed is given by the normalized theta function
\begin{equation}\label{eq:elliptic-seed}
F^{\mathrm{ell}}_1(z;\tau)
=
\log \widetilde{\vartheta}_1(z|\tau),
\qquad
\widetilde{\vartheta}_1(z|\tau)
=
\frac{\vartheta_1(z|\tau)}{\vartheta_1'(0|\tau)}.
\end{equation}
The normalization ensures
\[
\widetilde{\vartheta}_1(z|\tau)=z+O(z^3),
\]
so that the local logarithmic singularity matches the circular behavior.

\subsection{Universal recursion}

In both regimes we define
\begin{equation}\label{eq:universal-recursion}
F_{n+1}(w)
=
\int_0^{w} F_n(u)\,du,
\qquad
\partial_w F_{n+1}(w)=F_n(w),
\end{equation}
where $w=x$ in the circular case and $w=z$ in the elliptic case.

Thus the hierarchy is generated by the same integral-differential backbone.
The distinction between the two settings lies entirely in the analytic
nature of the seed.

\subsection{Trigonometric degeneration}

The circular regime is recovered as a degeneration of the elliptic one.
As $\Im\tau\to+\infty$, the Jacobi theta function satisfies
\[
\widetilde{\vartheta}_1(z|\tau)
\longrightarrow
\frac{\sin(\pi z)}{\pi},
\]
uniformly on compact subsets of $\mathbb{C}$.
Hence the elliptic seed degenerates to
\[
F^{\mathrm{ell}}_1(z;\tau)
=
\log \widetilde{\vartheta}_1(z|\tau)
\longrightarrow
\log\!\Bigl(\frac{\sin(\pi z)}{\pi}\Bigr).
\]

Restricting to the real axis and identifying $z=x\in(0,1)$,
this matches the circular seed up to an additive constant:
\[
\log\!\Bigl(\frac{\sin(\pi x)}{\pi}\Bigr)
=
\log(2\sin(\pi x))-\log(2\pi).
\]

Since the recursion backbone determines $F_n$ only up to constants,
the entire elliptic hierarchy degenerates to the circular one.
In particular, the CL- and SL-type components converge accordingly.

\subsection{Return to the polylogarithmic picture}

In the circular regime the hierarchy generated from
\(
F^{\mathrm{circ}}_1(x)=\log(2\sin(\pi x))
\)
admits the classical polylogarithmic interpretation.
Indeed, writing
\[
\log(2\sin(\pi x))
=
\Re \log\!\bigl(1-e^{2\pi i x}\bigr),
\]
one obtains the well-known expansion
\[
\log\!\bigl(1-e^{2\pi i x}\bigr)
=
-\sum_{m=1}^{\infty}\frac{e^{2\pi i m x}}{m},
\]
and iterated integration produces~\cite{ZagierPolylog}
\[
\mathrm{Li}_n(e^{2\pi i x})
=
\sum_{m=1}^{\infty}\frac{e^{2\pi i m x}}{m^n}.
\]

Thus the circular CL/SL-type families may be viewed as real and imaginary
parts of polylogarithms on the unit circle.
Since the elliptic construction degenerates to the circular one,
the polylogarithmic structure re-emerges in the trigonometric limit,
while the elliptic regime may be regarded as its doubly-periodic deformation.

\medskip

In summary, the circular and elliptic hierarchies share a common
recursion backbone and differ only in the analytic nature of the seed.
The circular construction appears as the trigonometric limit of the
elliptic one, while the elliptic regime may be viewed as a
doubly-periodic deformation of the polylogarithmic picture.

\medskip

This structural parallelism suggests a unified framework in which both regimes arise from a single analytic generating object, a perspective that will be explored in future work.

\subsection{Elliptic nature of the SL-component}

A distinctive feature of the elliptic SL-type component is that it is
controlled by the zero structure of the theta function.

Since $\widetilde{\vartheta}_1(z|\tau)$ is an odd function with simple zeros
along the period lattice $\mathbb{Z}+\tau\mathbb{Z}$,
its phase $\Arg(\widetilde{\vartheta}_1(z|\tau))$
undergoes a variation of $\pm\pi$ when crossing a nodal line,
and accumulates a full $2\pi$ increment around each lattice point.

Because the SL-hierarchy is generated by iterated integration of this phase,
the elliptic SL-type encodes the lattice geometry through the winding behavior
of the theta function.
In contrast, the circular regime involves only a single periodic direction,
and its SL-component reflects merely the one-dimensional zero structure of $\sin(\pi x)$.

\medskip

Differentiating the phase makes the analytic structure more transparent:
\[
\partial_z \Arg\bigl(\widetilde{\vartheta}_1(z|\tau)\bigr)
=
\Im\!\left(
\frac{\partial_z \widetilde{\vartheta}_1(z|\tau)}
     {\widetilde{\vartheta}_1(z|\tau)}
\right).
\]
Thus the local behavior of the SL-component is governed by the
logarithmic derivative of the normalized theta function.

\medskip

This logarithmic-derivative viewpoint will serve as the analytic entry point
for the next stage, where the global structure of the hierarchy
is examined from a unified generating perspective.

\section{The Hierarchy as a Generating Deformation}

We now reinterpret the CL/SL hierarchy from a generating viewpoint.
Instead of viewing $\{F_n\}_{n\ge1}$ as a sequence of iterated integrals,
it is convenient to encode the entire tower in a single formal generating
object depending on a deformation parameter.

This perspective makes the recursion backbone particularly transparent:
the differential relation $\partial_w F_{n+1}=F_n$ lifts to a simple
first-order equation for the generating series.
Throughout this section the parameter $\lambda$ is understood formally,
serving only to organize the hierarchy.

\subsection{Formal generating series}

Introduce the formal series
\begin{equation}\label{eq:gen-series}
\mathcal{F}(w;\lambda)
:=
\sum_{n=1}^{\infty} F_n(w)\,\lambda^{n-1},
\end{equation}
where $\lambda$ is a formal parameter and $F_1$ is the chosen seed
(circular or elliptic).

Using the recursion backbone
\(
\partial_w F_{n+1}=F_n,
\)
we compute
\[
\partial_w \mathcal{F}(w;\lambda)
=
\sum_{n=1}^{\infty} F_n(w)\,\lambda^{n}
=
\lambda\,\mathcal{F}(w;\lambda).
\]

Thus the generating function satisfies the simple differential equation
\begin{equation}\label{eq:gen-ode}
\partial_w \mathcal{F}(w;\lambda)
=
\lambda\,\mathcal{F}(w;\lambda),
\end{equation}
to be understood formally in $\lambda$.

\subsection{Solution and structural interpretation}

Formally solving~\eqref{eq:gen-ode} gives
\begin{equation}\label{eq:gen-solution}
\mathcal{F}(w;\lambda)
=
e^{\lambda w}\,\mathcal{F}(0;\lambda).
\end{equation}

Since $F_n(0)=0$ for $n\ge2$ by construction,
we have
\[
\mathcal{F}(0;\lambda)=F_1(0).
\]

Hence the entire hierarchy is generated by exponentiating
the backbone variable $w$ against the deformation parameter $\lambda$,
with the analytic content encoded solely in the seed.

\subsection{CL/SL decomposition at the generating level}

Since each $F_n$ admits the decomposition
\[
F_n(w)
=
\frac12 A(n;w)
-\frac{i}{2} B(n;w),
\]
the generating function likewise decomposes into real and imaginary parts:
\begin{equation}\label{eq:gen-CLSL}
\mathcal{F}(w;\lambda)
=
\frac12 \mathcal{A}(w;\lambda)
-\frac{i}{2} \mathcal{B}(w;\lambda),
\end{equation}
where
\[
\mathcal{A}(w;\lambda)
:=
\sum_{n=1}^{\infty} A(n;w)\lambda^{n-1},
\qquad
\mathcal{B}(w;\lambda)
:=
\sum_{n=1}^{\infty} B(n;w)\lambda^{n-1}.
\]

Taking real and imaginary parts of the generating equation
\(
\partial_w \mathcal{F}=\lambda \mathcal{F}
\)
gives
\begin{equation}\label{eq:gen-CLSL-ode}
\partial_w \mathcal{A}
=
\lambda\,\mathcal{A},
\qquad
\partial_w \mathcal{B}
=
\lambda\,\mathcal{B}.
\end{equation}

\medskip

Thus the CL/SL dichotomy reflects not a structural difference
in the hierarchy itself, but merely the real and imaginary
projections of a single generating deformation.

\begin{remark}[Backbone and generating viewpoints]
The generating formulation does not introduce a new hierarchy,
but merely reorganizes the recursion backbone.
Indeed, the relation $\partial_w F_{n+1}=F_n$ already determines
the tower once the seed $F_1$ is fixed.
The generating series $\mathcal{F}(w;\lambda)$ simply packages this
structure into a single deformation object,
whose real and imaginary projections reproduce the CL/SL families.
\end{remark}

\medskip

This generating viewpoint will provide a convenient starting point
for examining broader analytic structures associated with the hierarchy.

\section{Outlook and Analytical Remarks}

\subsection{Toward a Unified Generating Object}

The generating viewpoint developed in Section~4 suggests
the existence of a more fundamental object from which both
the circular Clausen structures and their elliptic
counterparts arise naturally.
Possible candidates include logarithmic derivatives of
Jacobi theta functions, Weierstrass sigma-type generating
functions, or Lerch-type analytic generators.
A systematic formulation of such a unified generating
object, capable of producing both CL- and SL-type
hierarchies, remains an interesting direction for future
investigation.

\subsection{Analytical Remarks}

The elliptic SL-type components involve phase functions
whose branches must be chosen consistently within the
fundamental domain.
Although the constructions presented here remain well
defined under the standard analytic continuation of
Jacobi theta functions, a detailed study of branch
structures, convergence properties, and analytic domains
will require a more systematic treatment.

\subsection{Future Directions}

Several natural extensions suggest themselves.
These include a fuller development of the elliptic
SL-type hierarchy, relations with Weierstrass-type
formulations, and the possible construction of a unified
generating framework connecting polylogarithmic,
Clausen, and elliptic structures.

The relation to Weierstrass-type expansions and the systematic appearance of Hurwitz numbers in the Laurent coefficients will be investigated in future work.

We hope that the present recursion-based perspective
provides a useful starting point for such developments.

\begin{remark}
The entire hierarchy may be viewed as a universal recursion generated by a choice of seed function $S_{\star}$, interpolating between polylogarithmic, trigonometric, and elliptic regimes. 

\[
F^{(\star)}_1(z)
=
\log S_{\star}(z),
\qquad
\partial_z F^{(\star)}_{n+1}(z)=F^{(\star)}_n(z),
\]
where the seed function $S_{\star}$ is chosen as
\[
S_\star(z)
=
\begin{cases}
1-e^{iz} & (\text{polylog seed})\\[4pt]
2\sin(z/2) & (\text{circular seed})\\[4pt]
\widetilde{\vartheta}_1(z|\tau) & (\text{elliptic seed})
\end{cases}
\]
\end{remark}


\section*{Acknowledgments}
The author thanks the collaborative discussions carried out within the \emph{hoge \& fuga} series (2026).

\bibliographystyle{plain}

\end{document}